
%

\documentclass[10pt,a4paper]{amsart}
\usepackage{amssymb,amsmath}
\textwidth 13cm
\textheight 21.5cm




\newtheorem{thm}{Theorem}[section]

\newtheorem{lem}[thm]{Lemma}

\newtheorem{rek}[thm]{Remark}


\newcommand\be{\begin{equation}}
\newcommand\ee{\end{equation}}
\newcommand\bea{\begin{eqnarray}}
\newcommand\eea{\end{eqnarray}}
\newcommand\bi{\begin{itemize}}
\newcommand\ei{\end{itemize}}
\newcommand\ben{\begin{enumerate}}
\newcommand\een{\end{enumerate}}
\newcommand\bc{\begin{center}}
\newcommand\ec{\end{center}}
\newcommand\ba{\begin{array}}
\newcommand\ea{\end{array}}




\newcommand{\R}{\ensuremath{\mathbb{R}}}




\newcommand{\fof}{\frac{1}{4}}  








\newcommand{\gl}{\lambda}
\newcommand{\gd}{\delta}
\newcommand{\gD}{\Delta}
\newcommand{\gO}{\Omega}
\newcommand{\go}{\omega}
\newcommand{\dgO}{\partial\Omega}
\newcommand{\gs}{\sigma}
\newcommand{\gG}{\Gamma}
\newcommand{\gGt}{\tilde{\Gamma}}

\begin{document}

\begin{center}
{\large\textbf{A RECONSTRUCTION PROCEDURE FOR THERMOACOUSTIC TOMOGRAPHY IN THE CASE OF LIMITED BOUNDARY DATA}\\
\bigskip
\medskip
Dustin \textsc{Steinhauer}\\
\bigskip
University of California, Los Angeles}
\end{center}
\bigskip
\bigskip

\textbf{Abstract.} We derive an explicit method for reconstructing singularities of the initial data in a thermoacoustic tomography problem, in the case of variable sound speed and limited boundary data.  In order to obtain this explicit formula we assume the metric induced by the sound speed does not have conjugate points inside the region to be observed.



\section{Reconstruction of the Wavefront Set Given Measurements on a Hyperplane}

Let $\gO$ be a bounded region in $\R^n$ such that $x_n<0$ for $x\in\gO$.  Let $H$ be the hyperplane $\{x_n=0\}$.  We study the Cauchy problem

\bea 
\label{cauchy}
Pu=u_{tt}-c^2(x)\gD u&=&0 \nonumber\\
u(x,0)&=&f(x)\nonumber\\
u_t(x,0)&=&0 \;,\eea
where $c$ is a smooth function which satisfies $\frac{1}{M}<c(x)<M$ for some number $M>1$.  This is the Cauchy problem commonly studied in mathematical investigations of thermoacoustic tomography and photoacoustic tomography (see \cite{KuKy} for a survey of results in this area).

The differential operator $P$ has symbol

\be p(x,\xi,\tau)=\tau^2-c^2(x)|\xi|^2 \;.\ee
$c(x)$ induces a Riemannian metric on $\R^n$ by setting $g_{ij}(x)=c^{-2}(x)\gd_{ij}$.  We assume $c(x)$ equals 1 outside $\gO$, so this metric equals the Euclidean metric outside $\gO$.  Our goal is to reconstruct singularities of the initial data $f$ given information about $u$ on $H\times\R_t$.  We therefore allow $f$ to be any distribution in $\mathcal{E}'(\gO)$.

The solution to (\ref{cauchy}) can be expressed as a sum of two Fourier integral operators applied to $f$, as in \cite{Du}:

\be u=(E_++E_-)(f)\;. \ee

$E_+$ and $E_-$ are of order $-\fof$ with nonvanishing principal symbols, and have canonical relations $\widetilde{\mathcal{H}_\pm}$ which are, roughly speaking, the union of bicharacteristic strips.  A bicharacteristic strip $\gamma(t)=(x(t),t,\xi(t),\tau)$ with initial condition $(y,\eta)$ is a smooth curve in $T^*(\R^n\times\R)$ defined by

\bea d_t(x(t),\xi(t))&=&\frac{1}{2\tau}(\nabla_\xi p,-\nabla_x p)=(2c^2(x)\xi,-\nabla_x(c^2(x))|\xi|^2)\nonumber\\
(x(0),\xi(0))&=&(y,\eta)\;.\eea
Since $p$ is independent of $t$, $\tau$ is constant and is determined by the condition $p(\gamma)=0$.  Thus given any initial condition $(y,\eta)$ there are two bicharacteristics with that initial condition, which we will denote by $\gamma^+_{(y,\eta)}(t),\gamma^-_{(y,\eta)}(t)$ where $\tau>0$ (resp. $<0$) for $\gamma^+$ (resp. $\gamma^-$).  We then have

\be\label{bichar}
\widetilde{\mathcal{H}_\pm}=\{(y,\eta,x,t,\xi,\tau)\:|\:(x,t,\xi,\tau)=\gamma^\pm_{(y,\eta)}(t)\}\;.\ee
Since $c(x)=1$ outside $\gO$, each bicharacteristic intersects $H\times\R$ at most once for $t>0$, and at most once for $t<0$.

Our goal is to reconstruct singularities of $f$ from knowledge of the solution $u$ on a portion $\dgO\times\R$.  However, we first consider a local problem in which we assume $u$ is known on $H\times\R$.

There exists $T>0$ such that the eikonal equation

\bea \label{ike} p(x,\nabla_x\phi,d_t\phi)=|d_t\phi|^2-c^2(x)|\nabla_x\phi|^2&=&0\nonumber\\
\phi(x,0,\xi,\tau)&=&\langle x,\xi\rangle \eea
has two smooth solutions $\phi_\pm$ on $\R^n_x\times[-T,T]_t$.  We also have $\nabla_x\phi_\pm\neq0$ on $\R^n\times[-T,T]$, so since $\phi_\pm$ are homogeneous of degree 1 in $\xi$, we have for any $K\subset\R^n$ compact,

\be \label{nonv} |\nabla_x\phi_\pm(x,t,\xi)|\geq C|\xi| \ee
on $K\times[-T,T]$.  In fact, since $c(x)=1$ outside $\gO$, we can take $C$ uniform on $ \R^n\times[-T,T]$.


Define two Fourier integral operators $S_\pm$ of zeroth order by

\bea \label{schweeet}
(S_+ f)(x',t)&=&\frac{1}{2(2\pi)^n}\iint e^{i(\phi_+(x',0,t,\eta)-\langle y,\eta\rangle)}a_+(x',0,t,\eta)f(y)\:dy\:d\eta\nonumber\\
(S_-f)(x',t)&=&\frac{1}{2(2\pi)^n}\iint e^{i(\phi_-(x',0,t,\eta)-\langle y,\eta\rangle)}a_-(x',0,t,\eta)f(y)\:dy\:d\eta\;. \eea
The amplitudes $a_\pm=a_\pm(x,t,\eta)\in S^0_{cl}$ are constructed by geometrical optics so that the function

\be\label{ballz} u(x',t):=(S_+ +S_-)(f)(x',t) \ee
is the solution to (\ref{cauchy}) restricted to $H$, modulo smoothing operators.  This representation is valid for $|t|<T$.  We have $a_+(x,0,\eta)=a_-(x,0,\eta)=1$ and since they are constructed as solutions of transport equations, their principal symbols are nonvanishing.

$S_+$ and $S_-$ have formal adjoints acting on $v\in\mathcal{C}_0^\infty(H\times [-T,T])$ given by

\bea (S_+^*v)(y)&=&\frac{1}{2(2\pi)^n}\iiint e^{i(\langle y,\eta\rangle-\phi_+(x',0,t,\eta))}\overline{
a_+(x',0,t,\eta)}v(x',t)\:dx'\:dt\:d\eta\nonumber\\
(S_-^*v)(y)&=&\frac{1}{2(2\pi)^n}\iiint e^{i(\langle y,\eta\rangle-\phi_-(x',0,t,\eta))}\overline{
a_-(x',0,t,\eta)}v(x',t)\:dx'\:dt\:d\eta \eea
and they extend to continuous maps $\mathcal{E}'(H\times [-T,T])\rightarrow\mathcal{D}(\R^n)$.

The canonical relations of $S_\pm$, which we call $\mathcal{H}_\pm$, are defined when $|t|<T$ by taking a point $(y,\eta,x',0,t,\xi,\tau)\in\widetilde{\mathcal{H}_\pm}$ and projecting $(x',0,t,\xi,\tau)$ onto $T^*(x',t)$, a single fiber in $T^*(H\times\R)$ (see Figure 1).  It is also possible to determine a point in $\mathcal{H}_\pm$ given a point $(x',t,\xi',\tau)\in T^*(H\times[-T,T])\backslash 0$ as follows: there is a unique number $\xi_n$ such that $p(x',0,\xi',\xi_n,\tau)=0$ and the bicharacteristic of $p$ through $(x',0,t,\xi',\xi_n,\tau)$ travels to the left (decreasing $x_n$) as $t$ goes towards 0.  Since $\gO$ lies to the left of $H$, this is the only choice of $\xi_n$ such that the resulting bicharacteristic might lie in $\gO$ when $t=0$.  Continue this bicharacteristic through $(x',0,t,\xi',\xi_n,\tau)$ back to $\{t=0\}$ and if it arrives in $\gO$, project onto $T^*(\gO)$ to obtain a point $(y,\eta)$.  We can thus write

\be \label{can1}
\mathcal{H}_\pm = \{(y,\eta,x',t,\xi',\tau)\:|\:(y,\eta,x',0,t,\xi',\xi_n,\tau)
\in\widetilde{\mathcal{H}_\pm}\} \;.\ee
From the local representation in (\ref{ballz}) we also have

\bea\label{can2}
\mathcal{H}_\pm &=& \{(\nabla_\eta\phi_\pm(x',0,t,\eta),\eta,x',t,\nabla_{(x',t)}\phi_\pm(x',0,t,\eta))\nonumber\\
&|&(x',t,\eta)\in\R^{n-1}_x\times[-T,T]_t\times\R^n_\eta\} \eea
We note the following consequence of the preceding:

\begin{lem}
\label{endpoint} Suppose

\be\label{endpt}\nabla_{(x',t)}\phi_\pm(x',0,t,\eta)=\nabla_{(x',t)}\phi_\pm(x',0,t,\zeta)\;.\ee
Then $\eta=\zeta$. \end{lem}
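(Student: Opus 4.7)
The plan is to leverage the two complementary descriptions of the canonical relation $\mathcal{H}_\pm$ given in (\ref{can1}) and (\ref{can2}). The parametrization (\ref{can2}) presents points of $\mathcal{H}_\pm$ via the phase $\phi_\pm$ applied to triples $(x',t,\eta)$, while (\ref{can1}) gives a geometric characterization and, crucially, shows that a point of $\mathcal{H}_\pm$ is uniquely recoverable from its image under the natural projection onto $T^*(H\times[-T,T])$.

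Concretely, set $(\xi',\tau):=\nabla_{(x',t)}\phi_\pm(x',0,t,\eta)$, which by hypothesis also equals $\nabla_{(x',t)}\phi_\pm(x',0,t,\zeta)$. Then (\ref{can2}) applied at $\eta$ and at $\zeta$ produces two a priori distinct elements of $\mathcal{H}_\pm$:
\[
(\nabla_\eta\phi_\pm(x',0,t,\eta),\eta,x',t,\xi',\tau)\quad\text{and}\quad(\nabla_\eta\phi_\pm(x',0,t,\zeta),\zeta,x',t,\xi',\tau),
\]
which project to the identical covector $(x',t,\xi',\tau)$ on $H\times[-T,T]$. I would then invoke (\ref{can1}) to argue that this projection $\mathcal{H}_\pm\to T^*(H\times[-T,T])$ is injective: the scalar equation $p(x',0,\xi',\xi_n,\tau)=0$ together with the left-going selection rule pins down $\xi_n$, whereupon the bicharacteristic through $(x',0,t,\xi',\xi_n,\tau)$ recovers $(y,\eta)$ uniquely by back-flow to the time-$0$ slice. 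This forces the two tuples above to coincide, hence $\eta=\zeta$.

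The only step requiring genuine care is verifying that the $\xi_n$ implicit in the phase description, namely $\partial_{x_n}\phi_\pm(x',0,t,\eta)$, is in fact the one singled out by the left-going rule in (\ref{can1}); without this check, the two descriptions of $\mathcal{H}_\pm$ could in principle align different $\xi_n$-roots of $p=0$ and the uniqueness argument would collapse. This should fall out of the geometric-optics construction of $\phi_\pm$: the bicharacteristic built into the phase starts at $\nabla_\eta\phi_\pm(x',0,t,\eta)\in\gO$ (hence with $x_n<0$) at time $0$ and arrives on $H$ at time $t$, so $x_n$ must decrease as the time parameter runs from $t$ back to $0$. Once this geometric observation is in hand, the rest of the argument is a formal matching of the two parametrizations of $\mathcal{H}_\pm$, and I would expect the left-going verification to be the only substantive part.
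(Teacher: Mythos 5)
Your proposal is correct and follows essentially the same route as the paper: complete the common tangential covector $(\xi',\tau)=\nabla_{(x',t)}\phi_\pm$ to a null covector by the unique (left-going, $\xi_n<0$) root of $p=0$, follow the resulting bicharacteristic back to $t=0$, and read off $\eta$ from the initial data via (\ref{bichar}). The only difference is that you explicitly flag the need to check that $\partial_{x_n}\phi_\pm(x',0,t,\eta)$ is the root selected by the left-going rule, a point the paper's proof takes for granted.
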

\begin{proof} Let $\xi_n<0$ be the unique number satisfying

\be p(x',0,\nabla_{x'}\phi_\pm(x',0,t,\eta),\xi_n,d_t\phi_\pm(x',0,t,\eta))=0\;. \nonumber\ee
The existence and uniqueness of $\xi_n$ means the condition in (\ref{endpt}) determines a unique bicharacteristic strip $\gamma$ whose projection from $T^*(\R^n_x\times\R_t)$ into $\R^n_x\times\R_t$ passes through $(x',0,t)$.  Continue $\gamma$ back to $\{t=0\}$.  The projection of $\gamma(0)$ from $T^*(\R^n_x\times\R_t)$ into $T^*\R^n_x$ is uniquely determined, so (\ref{bichar}) implies that $\eta=\zeta$.\end{proof}

A point $(y,\eta)\in T^*(\gO)$ will be called \textit{visible} if there exists $(x',t,\xi',\tau)$ such that either $(x',0,t,\xi',\xi_n,\tau)\in\gamma^+_{(y,\eta)}$ or $(x',0,t,\xi',\xi_n,\tau)\in\gamma^-_{(y,\eta)}$ (with $\xi_n$ determined as above).  If $|t|<T$, this is equivalent to having $(y,\eta,x',t,\xi',\tau)\in \mathcal{H}^+$ or $(y,\eta,x',t,\xi',\tau)\in \mathcal{H}^-$.  For example, if $c\equiv 1$, $(y,\eta)$ is visible if and only if $\eta_n\neq 0$.

Consider the effect of applying $S_+^*+S_-^*$ to $u$.  We must analyze the operators $S_+^*S_+$ and $S_+^*S_-$.  Neither operator is well-defined on $\mathcal{E}'(\gO)$, so cutoffs will have to be applied.  Once this problem is fixed, it will turn out that the first operator reconstructs certain visible singularities of $f$, while the second operator is smoothing.

In what follows, if $f$ and $g$ are distributions, we write $f\equiv g$ on $U\subset T^*(X)\backslash 0$ if for any pseudodifferential operator $A\in L^0$ whose essential support lies in $U$, we have $A(f-g)\in\mathcal{C}^\infty$.  Two such distributions are called \textit{microlocally equivalent modulo $\mathcal{C}^\infty$ on $U$}.

\begin{thm}
Let $\chi\in\mathcal{C}^\infty_0(H\times[-T,T])$ and let $V\subset H\times[-T,T]$ be an open set such that $\chi(x',t)=1$ for all $(x',t)\in V$  Let $U_\pm\subset T^*(\R^n)$ be open cones in $T^*(\R^n)$ such that if $(y,\eta)\in U_\pm$, the projection of the intersection of $\gamma_{(y,\eta)}^\pm$ with $T^*(H\times[-T,T])$ intersects $H\times[-T,T]$ in $V$.  Then there exist pseudodifferential operators $R_\pm\in L^0(\R^n)$ such that $(R_+S_+^*+R_-S_-^*)(S_+ +S_-)(f)\equiv f$ on $U_+\cup U_-$.\end{thm}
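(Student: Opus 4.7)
The plan is to expand the left-hand side into four compositions
\[
R_+ S_+^* S_+ + R_+ S_+^* S_- + R_- S_-^* S_+ + R_- S_-^* S_-,
\]
(with $\chi$ understood to be inserted in the middle so that each $S_\pm^*$ acts on a compactly supported distribution), and then to show that the two cross terms are microlocally smoothing while the two diagonal terms are elliptic pseudodifferential operators on $U_\pm$. From these ingredients one constructs $R_\pm$ by taking microlocal parametrices for the diagonal compositions and glueing them with a partition of unity.

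For the cross terms I would argue as follows. From the eikonal equation (\ref{ike}) and the initial condition $\phi_\pm(x,0,\xi,\tau)=\langle x,\xi\rangle$, the two solutions are distinguished by $d_t \phi_+ > 0$ and $d_t \phi_- < 0$ on $\R^n \times [-T,T]$. Consequently the canonical relations $\mathcal{H}_+$ and $\mathcal{H}_-$ lie in the disjoint half-cones $\{\tau>0\}$ and $\{\tau<0\}$ of $T^*(H \times [-T,T])$, so the canonical relations of $S_+^* \chi S_-$ and $S_-^* \chi S_+$ are empty and these operators are smoothing.

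For the diagonal terms, the composed canonical relation of $S_\pm^* \chi S_\pm$ consists of quadruples $(y_1,\eta_1,y_2,\eta_2)$ such that $(y_j,\eta_j,x',t,\xi',\tau) \in \mathcal{H}_\pm$ for a common middle point $(x',t,\xi',\tau)$. Using the parametrization (\ref{can2}), this forces $\nabla_{(x',t)}\phi_\pm(x',0,t,\eta_1) = \nabla_{(x',t)}\phi_\pm(x',0,t,\eta_2)$, and Lemma \ref{endpoint} then gives $\eta_1 = \eta_2$, whence $y_1=y_2$. Thus the composed canonical relation is the diagonal of $T^*(\R^n)\times T^*(\R^n)$, and the FIO composition calculus places $S_\pm^* \chi S_\pm \in L^0(\R^n)$. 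Stationary phase yields a principal symbol proportional to $\chi(x',t)\,|a_\pm(x',0,t,\eta)|^2$ evaluated at the (unique) point where $\gamma^\pm_{(y,\eta)}$ meets $H$; on $U_\pm$ this point lies in $V$ where $\chi \equiv 1$, and $a_\pm$ is nonvanishing, so $S_\pm^* \chi S_\pm$ is elliptic on $U_\pm$.

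Finally, I would choose $\Psi_\pm \in L^0(\R^n)$ with essential support in $U_\pm$ and $\Psi_+ + \Psi_- \equiv I$ on $U_+ \cup U_-$, and take microlocal parametrices $Q_\pm \in L^0(\R^n)$ for $S_\pm^* \chi S_\pm$ on $U_\pm$. Setting $R_\pm := \Psi_\pm Q_\pm$ and combining the above gives
\[
(R_+ S_+^* + R_- S_-^*)(S_+ + S_-)f \equiv R_+(S_+^*\chi S_+)f + R_-(S_-^*\chi S_-)f \equiv \Psi_+ f + \Psi_- f \equiv f
\]
on $U_+ \cup U_-$. I expect the main technical obstacle to be the rigorous verification that $S_\pm^* \chi S_\pm$ is genuinely a pseudodifferential operator rather than an FIO with a more complicated Lagrangian: this requires checking cleanness of the canonical-relation composition using Lemma \ref{endpoint} together with the uniform non-degeneracy (\ref{nonv}), and tracking the Jacobians arising in the stationary-phase calculation of the principal symbol to confirm ellipticity on the full visibility cone.
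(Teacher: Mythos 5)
Your proposal is correct and follows essentially the same route as the paper: the same four-term decomposition, Lemma \ref{endpoint} to identify the diagonal compositions $S_\pm^*\chi S_\pm$ as elliptic pseudodifferential operators with principal symbol $|a_\pm|^2\chi$ on $U_\pm$, the opposite signs of $d_t\phi_\pm$ to dispose of the cross terms, and a partition of unity to assemble $R_\pm$ from microlocal parametrices. The only difference is one of packaging: where you invoke the abstract FIO composition calculus and the emptiness of the composed canonical relation, the paper carries out the computations by hand --- a cutoff near $\zeta=\eta$ followed by stationary phase for the diagonal terms, and repeated integration by parts in $t$ (non-stationary phase) for the cross terms.
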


\begin{proof} To analyze $S_+^*\chi S_+$ we follow the arguments of \cite{GrSj}, chapter 10.  We have, in the sense of distributions,

\bea (S_+^*\chi S_+)(f)(z) = \frac{1}{4(2\pi)^{2n}} \int e^{i(\langle z,\zeta\rangle-\phi_+(x',0,t,\zeta)+\phi_+(x',0,t,\eta)-\langle y,\eta\rangle)}\nonumber\\
\overline{a_+(x',0,t,\zeta)}a_+(x',0,t,\eta)\chi(x',t) f(y)\:dy\:d\eta\:dx'\:dt\:d\zeta\;.\eea
The integral is well-defined in the sense of distributions because $\chi S_+ f\in\mathcal{E}'(H\times [-T,T])$, and $S_+^*:\mathcal{E}'(H\times [-T,T])\rightarrow\mathcal{D}'(\R_z^n)$.  (We could approximate $a_+$ by symbols in $S^{-\infty}$ to write $S_+^*\chi S_+$ as an absolutely convergent integral operator.)

We denote the phase by

\be \Phi=\langle z,\zeta\rangle-\phi_+(x',0,t,\zeta)+\phi_+(x',0,t,\eta)-\langle y,\eta\rangle\;. \ee

We break up the integral by inserting a cutoff $\psi\left(\frac{|\zeta-\eta|}{|\eta|}\right)$ and consider

\bea (S_+^*\chi S_+)(f)(z)
&=&\frac{1}{4(2\pi)^{2n}} \int e^{i\Phi}\overline{a_+(x',0,t,\zeta)}a_+(x',0,t,\eta)\nonumber\\
&\times&\psi\left(\frac{|\zeta-\eta|}{|\eta|}\right)\chi(x',t) f(y)\:dy\:d\eta\:dx'\:dt\:d\zeta\nonumber\\
&+&\frac{1}{4(2\pi)^{2n}} \int e^{i\Phi}\overline{a_+(x',0,t,\zeta)}a_+(x',0,t,\eta)\nonumber\\
&\times&\left(1-\psi\left(\frac{|\zeta-\eta|}{|\eta|}\right)\right)\chi(x',t) f(y)\:dy\:d\eta\:dx'\:dt\:d\zeta\;.\eea
By Lemma \ref{endpoint}, $\Phi$ has no stationary points in the support of the amplitude of the second integral, so we only need to consider the first.  Write $\eta=\gl\go$ where $\gl=|\eta|$ and make the change of variables $\zeta=\gl\gs$.  Then we are left with

\be \frac{\gl^n}{4(2\pi)^{2n}}\int e^{i\gl\tilde{\Phi}} \overline{a_+(x',0,t,\gl\gs)}a_+(x',0,t,\eta)\psi(|\gs-\go|)\chi(x',t) f(y)\:dy\:d\eta\:dx'\:dt\:d\gs\ee
where

\be \tilde{\Phi}=\langle z,\gs\rangle-\phi_+(x',0,t,\gs)+\phi_+(x',0,t,\go)-\langle y,\go\rangle\;. \ee

We use the method of stationary phase on the $(x',t,\gs)$-integrals.  (The integrand is compactly supported in these variables.)  By Lemma \ref{endpoint} we have the critical point $\gs=\go,\; z=\nabla_\gs\phi_+(x',0,t,\gs)|_{\gs=\go}$.  In a neighborhood of any point of $\mathcal{H}_+$, $\mathcal{H}_+$ is parametrized by the initial conditions $(y,\eta)$ of bicharacteristics, so $\mathcal{H}_+$ is a smooth manifold of dimension $2n$.  This implies that the critical point is non-degenerate (see \cite{Tre}, Proposition 8.1.2).  

\bea (S_+^*\chi S_+)(f)(z) &=& \frac{1}{2(2\pi)^n} \iint e^{i\langle z-y,\eta\rangle} b(z,\eta)\:dy\:d\eta \nonumber\\
&+& Kf(z)\;,\eea
where $K\in L^{-\infty}$.  $b\in S^0_{1,0}$ and the principal symbol is

\be\label{deeeznuuuts} b_0(z,\eta)=|a_+(x',0,t,\eta)|^2\chi(x',t)\mid_{\nabla_\eta\phi_+(x',0,t,\eta)=z}\;.\ee
In light of (\ref{can2}), we can write

\be b_0(z,\eta)=|a_+(\pi\circ\mathcal{H}_+(z,\eta),\eta)|^2\times(\chi\circ\pi\circ\mathcal{H}_+)(z,\eta) \ee
where $\pi:T^*(H\times[-S,S])\rightarrow H\times[-T,T]$ is the natural projection.

$b_0\neq0$ on $U_+$, so there exists a pseudodifferential operator $R_+'\in L^0(U_+)$ such that $(R_+'S_+\chi S_+)f\equiv f$ on $U_+$.

By a similar procedure, we can also construct $R_-'\in L^0(U_-)$ such that $(R_-'S_-\chi S_-)f\equiv f$ on $U_-$.\\

We next examine the operator $S_+^*\chi S_-$.  We have

\bea (S_+^*\chi S_-)(f)(z) = \frac{1}{4(2\pi)^{2n}} \int e^{i(\langle z,\zeta\rangle-\phi_+(x',0,t,\zeta)+\phi_-(x',0,t,\eta)-\langle y,\eta\rangle)}\nonumber\\
\overline{a_+(x',0,t,\zeta)}a_-(x',0,t,\eta)\chi(x',t) f(y)\:dy\:d\eta\:dx'\:dt\:d\zeta\;.\eea
so the Schwartz kernel of $S_+^* \chi S_-$ is

\bea K(z,y)= \int e^{i(\langle z,\zeta\rangle-\phi_+(x',t,\zeta)+\phi_-(x',t,\eta)-\langle y,\eta\rangle)} A(x',t,\eta,\zeta) d\eta\:dx'\:dt\:d\zeta \;,\eea
where

\be A(x',t,\eta,\zeta)=\frac{1}{4(2\pi)^{2n}}\overline{a_+(x',0,t,\zeta)}a_-(x',0,t,\eta)\chi(x',t)\ee
is of class $S^0_{1,0}$.

The derivative in $t$ of the phase could vanish only if $d_t\phi_+ = d_t\phi_-$.  But $\phi_+$ and $\phi_-$ are distinct solutions of the eikonal equation, so their $t$ derivatives have differing sign.  In fact,

\bea d_t\phi_+(x,t,\eta) &=& c(x)|\nabla_x\phi_+(x,t,\eta)| \nonumber\\
d_t\phi_-(x,t,\eta) &=& -c(x)|\nabla_x\phi_-(x,t,\eta)| \;.\eea
Define a differential operator

\be L = \frac{1}{i(d_t\phi_-(x',0,t,\eta)-d_t\phi_+(x',0,t,\zeta))}\:d_t \;.\ee
Then $L$ fixes the exponential portion of the integral:

\be L\left(e^{i(\langle z,\zeta\rangle-\phi_+(x',t,\zeta)+\phi_-(x',t,\eta)-\langle y,\eta\rangle)}\right)
=e^{i(\langle z,\zeta\rangle-\phi_+(x',t,\zeta)+\phi_-(x',t,\eta)-\langle y,\eta\rangle)} \;.\ee
From (\ref{nonv}) we have

\be \frac{1}{|d_t\phi_-(x',0,t,\eta)-d_t\phi_+(x',0,t,\zeta)|}\leq \frac{C}{|\eta|+|\zeta|}\;.\ee
Since $d_{tt}\phi_\pm(x,t,\xi)$ is also homogeneous in $\xi$ of degree 1, we have

\be \left|d_t\:\frac{1}{d_t\phi_-(x',0,t,\eta)-d_t\phi_+(x',0,t,\zeta)}\right|\leq\frac{C}{|\eta|+|\zeta|}\;.\ee
After repeated integration by parts using the operator $L^t$, we get

\bea K(z,y)&=&\int e^{i(\langle z,\zeta\rangle-\phi_+(x',0,t,\zeta)+\phi_-(x',0,t,\eta)-\langle y,\eta\rangle)}\left( (L^t)^k A(x',t,\eta,\zeta)\right) d\eta\:dx'\:dt\:d\zeta\nonumber\\
&\leq& \int e^{i(\langle z,\zeta\rangle-\phi_+(x',0,t,\zeta)+\phi_-(x',0,t,\eta)-\langle y,\eta\rangle)}A_{-k}(x',t,\eta,\zeta)d\eta\:dx'\:dt\:d\zeta \eea
where $A_{-k}\in S^{-k}_{1,0}$.  This can be achieved for any $k>0$.  Hence $K\in\mathcal{C}^\infty(\R^{2n})$, and $S_+^*\chi S_-\in L^{-\infty}$.  A similar argument also shows $S_-^*\chi S_+\in L^{-\infty}$.\\

We now combine $R_+'$ and $R_-'$ to form a full parametrix for $\chi(S_++S_-)$.  Let $\theta_\pm(y,\eta)\in \mathcal{C}^\infty(U_+\cup U_-)$ be homogeneous functions of degree 0 forming a partition of unity on $U_\pm$.  Let 

\be R_\pm(x,D)=\theta_\pm(x,D)R_\pm'(x,D) \;.\ee
Then by construction, on $U_+\cap U_-$ we have

\bea (R_+S_+^*+R_-S_-^*)(S_+ +S_-)(f)&\equiv&(\theta_+R_+'S_+^*S_+)(f)+\theta_-R_-'S_-^*S_-)(f)\nonumber\\
&\equiv& \theta_+f +\theta_-f\nonumber\\
&=&f\;.\eea

\end{proof}

If the metric $g_{ij}$ has no conjugate points, then we can take $T=\infty$.  As a result, for any visible singularity $(y,\eta)$ we can choose $V$ such that $(y,\eta)$ lies in $U_+\cup U_-$.

\section{Reconstruction of the Wavefront Set Given Measurements on the Boundary}

We now investigate the situation when the measurement surface $\gG$ is not a hyperplane, but instead is a relatively open subset of $\dgO$.  For simplicity we assume that $\gO$ is strictly convex, so that any bicharacteristic along which a singularity might travel intersects $\dgO$ at most once for $t>0$ and also at most once for $t<0$.  Let $\gGt$ be an open subset of $\dgO$ compactly contained in $\gG$.  We will attempt to reconstruct singularities that reach $\gGt$.

Let $W_1,\ldots W_k$ be a partition of $\gG$ such that we have boundary normal coordinates near each $W_j$.
Then, as before we define $U_{j,\pm}\subset T^*(\R^n)$ be open cones in $T^*(\R^n)$ such that if $(y,\eta)\in U_{j,\pm}$, the projection of the intersection of $\gamma_{(y,\eta)}^\pm$ with $T^*(\dgO\times\R)$ intersects $\dgO\times\R$ in $W_j\times[-T.T]$.  Let $\chi_j\in \mathcal{C}^\infty_0(T^*(\dgO\times\R))$ be cutoff functions such that $\text{supp}(\chi_j)\subset W_j\times\R$ and $\sum_j \chi_j=1$ on $\gGt\times[-T,T]$.  Our measurements correspond to knowing $u|_{\gGt\times[-T,T]}$.  Let


\be U=\bigcup_{j,\pm} U_{j,\pm}\;.\ee
We call $U$ the \textit{visibility set}, and attempt to reconstruct the singularities of $f$ on $U$.

Consider a single patch $W_j$ and let $(w',w_n)$ be boundary normal coordinates near $W_j$ such that $\gO$ corresponds to $w_n<0$.  Writing $\chi_j(S
_++S_-)$ in those coordinates we obtain data in the same form as in (\ref{schweeet}):

\bea u_j(w',t)&=&\chi_j(S_{j,+}+S_{j,-})(f)(w',t)\nonumber\\
&=&\frac{1}{2(2\pi)^n}\iint e^{i(\varphi_{j,+}(w',t,\eta)-\langle y,\eta\rangle)}\tilde{a}_{j,+}(w',t,\eta)\chi_j(w',t)f(y)\:dy\:d\eta\nonumber\\
&+&\frac{1}{2(2\pi)^n}\iint e^{i(\varphi_{j,-}(w',t,\eta)-\langle y,\eta\rangle)}\tilde{a}_{j,-}(w',t,\eta)\chi_j(w',t)f(y)\:dy\:d\eta\;, \eea
where

\bea \varphi_{j,\pm}(w',t,\eta)&=&\phi_\pm(x,t,\eta) \nonumber\\
\tilde{a}_{j,\pm}(w',t,\eta) &=& a_\pm(x,t,\eta) \eea
on $W_j\times[-T,T]$.

We apply the operator

\bea (S^*_{j,+}+S^*_{j,-})(\cdot)(z)&=&\frac{1}{2(2\pi)^n}\iiint e^{i(\langle z,\eta\rangle-\varphi_{j,+}(w',t,\eta))}\overline{
\tilde{a}_{j,+}(w',t,\eta)}(\cdot)\:dw'\:dt\:d\eta\nonumber\\
&+&\frac{1}{2(2\pi)^n}\iiint e^{i(\langle z,\eta\rangle-\varphi_{j,-}(w',t,\eta))}\overline{
\tilde{a}_{j,-}(w',t,\eta)}(\cdot)\:dw'\:dt\:d\eta \eea
to $u_j$.  As above, we obtain pseudodifferential operators of order 0, modulo smoothing operators, which are elliptic on $U_{j,+}$ and $U_{j,-}$, respectively.  Hence there exist $R_{j,\pm}\in L^0_{1,0}$ elliptic such that

\be (R_{j,+}S^*_{j,+}+R_{j,-}S^*_{j,-})\chi_j(S_{j,+}+S_{j,-})f\equiv f \ee
on $U_{j,+}\cup U_{j,-}$.

Let $\theta_j=\theta_j(y,\eta)\in\mathcal{C}^\infty(U)$ form a partition of unity on $U$ with each $\theta_j$ supported on $U_{j,+}\cup U_{j,-}$.  Then we have

\be \sum_j\theta_j(y,D_y)(R_{j,+}S^*_{j,+}+R_{j,-}S^*_{j,-})\chi_j(S_{j,+}+S_{j,-})f\equiv f \ee
on $U$.\\

In conclusion, we discuss a simple condition which guarantees that we can resonstruct the entire wavefront set of $f$.  Suppose that for each $(y,\eta)\in T^*(\gO)$, either $\gamma^+_{(y,\eta)}$ or $\gamma^-_{(y,\eta)}$ reaches $\gGt$ in a time less than $T$.  (Equivalently, we may say that either of the unit speed geodesics eminating from $(y,\eta)$ or $(y,-\eta)$ exits $\gO$ through $\gGt$ in a time less than $T$.)  Then every singularity of $f$ is visible, i.e. $T^*(\gO)\subset U$.  As a result,

\be \sum_j\theta_j(y,D_y)(R_{j,+}S^*_{j,+}+R_{j,-}S^*_{j,-})\chi_j \ee

reconstructs $f$ from $u|_{\gGt\times[-T,T]}$.  We summarize this discussion in the following theorem:

\begin{thm}

Let $f\in\mathcal{E}'(\gO)$ and let $U$ be the visible set.  Then with $\chi_j, S_{j,\pm}, R_{j,\pm}$ as above,

\be \sum_j\theta_j(y,D_y)(R_{j,+}S^*_{j,+}+R_{j,-}S^*_{j,-})\chi_j(S_{j,+}+S_{j,-})f\equiv f\ee
on $U$.

\end{thm}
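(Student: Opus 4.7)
The plan is to reduce this global statement to the hyperplane reconstruction of Theorem 1.1, applied patch by patch on $\gG$ in boundary normal coordinates, and then to glue the resulting local parametrices by the partition of unity $\{\theta_j\}$ on $U$. Since the theorem really just collects what has been derived in the paragraphs immediately preceding it, the proof is largely a matter of organizing those pieces and checking that the microlocal equalities line up on $U$.

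First, on each patch $W_j$, I would fix boundary normal coordinates $(w',w_n)$ with $\gO$ locally given by $w_n<0$ and $W_j\subset\{w_n=0\}$. In these coordinates the restriction of $u=(E_++E_-)f$ to $W_j\times[-T,T]$ has the same Fourier-integral form as in (\ref{schweeet}), with phases $\varphi_{j,\pm}$ and amplitudes $\tilde a_{j,\pm}$ obtained by transporting $\phi_\pm$ and $a_\pm$ through the chart; these remain classical symbols of order $0$ with nonvanishing principal symbol, and the eikonal and transport equations transform covariantly, so the analogue of Lemma \ref{endpoint} holds for $\varphi_{j,\pm}$.

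Second, I would repeat the analysis in the proof of Theorem 1.1 on each patch. The diagonal compositions $S^*_{j,\pm}\chi_j S_{j,\pm}$ are, by stationary phase in $(w',t,\gs)$, pseudodifferential operators of order $0$ with principal symbol
\be
|a_\pm(\pi\circ\mathcal{H}_{j,\pm}(y,\eta),\eta)|^2\,(\chi_j\circ\pi\circ\mathcal{H}_{j,\pm})(y,\eta),
\ee
which is nonvanishing on $U_{j,\pm}$ by the defining property of those cones. The off-diagonal compositions $S^*_{j,+}\chi_j S_{j,-}$ and $S^*_{j,-}\chi_j S_{j,+}$ are smoothing, by the same integration-by-parts argument using the operator $L$, which depends only on the fact that $d_t\varphi_{j,+}=c|\nabla_x\phi_+|$ and $d_t\varphi_{j,-}=-c|\nabla_x\phi_-|$ have opposite signs and the $|\eta|+|\zeta|$-bound in (\ref{nonv}); both properties survive the coordinate change. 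Elliptic inversion on $U_{j,\pm}$ then produces $R_{j,\pm}\in L^0_{1,0}$ with
\be
(R_{j,+}S^*_{j,+}+R_{j,-}S^*_{j,-})\chi_j(S_{j,+}+S_{j,-})f\equiv f\quad\text{on }U_{j,+}\cup U_{j,-}.
\ee

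Third, I would combine these local parametrices using the partition of unity. Pre-composing with $\theta_j(y,D_y)$, whose essential support lies in $U_{j,+}\cup U_{j,-}$, and summing, one uses $\sum_j\theta_j=1$ on $U$ and the previous displayed equivalence on the essential support of $\theta_j$ to conclude
\be
\sum_j\theta_j(y,D_y)(R_{j,+}S^*_{j,+}+R_{j,-}S^*_{j,-})\chi_j(S_{j,+}+S_{j,-})f\equiv\sum_j\theta_j(y,D_y)f\equiv f
\ee
on $U$, giving the theorem.

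The main obstacle is not an analytic one but a bookkeeping one: making sure the transfer from the hyperplane case to the curved boundary is genuinely microlocal, i.e. that the ellipticity domains of the diagonal compositions in boundary normal coordinates coincide with the visibility cones $U_{j,\pm}$, and that the supports of the $\theta_j$ are correctly matched to them so that the partition of unity argument on $U$ goes through without loss. Because bicharacteristics and the canonical relations $\mathcal{H}_{j,\pm}$ are invariantly defined, once the patch-level rewriting is done this matching is automatic, and the curved case inherits both the ellipticity and smoothing statements from Theorem 1.1.
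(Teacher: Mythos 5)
Your proposal is correct and follows essentially the same route as the paper: the paper's own ``proof'' of this theorem is precisely the preceding discussion in Section 2, which rewrites the data in boundary normal coordinates on each patch $W_j$, reruns the Theorem 1.1 analysis (stationary phase for the diagonal compositions, integration by parts for the cross terms) to get the local parametrices $R_{j,\pm}$ elliptic on $U_{j,\pm}$, and then glues with the partition of unity $\theta_j$. Your additional remarks about the covariance of the eikonal/transport equations and the invariance of the canonical relations under the coordinate change are the right points to check and are consistent with what the paper implicitly assumes.
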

\pagebreak

\begin{rek}

This work is part of the author's dissertation at UCLA.  Recently a paper \cite{StUhl} by P. Stefanov and G. Uhlmann was announced in which similar results were obtained.  Our proofs differ from those in \cite{StUhl}.
\end{rek}


\begin{thebibliography}{widest-label}

\bibitem[AgKu]{AgKu}Agranovsky, M. \& Kuchment, P. 2007 Uniqueness of reconstruction and
an inversion procedure for thermoacoustic and photoacoustic tomography
with variable sound speed. Inverse Problems 23, 2089-2102.

\bibitem[AgQu]{AgQu}Agranovsky, M. \& Quinto, E. T. 1996 Injectivity sets for the Radon
transform over circles and complete systems of radial functions. Journal
of Functional Analysis, 139, 383–414.

\bibitem[Bey]{Bey}Beylkin, G. Imaging of discontinuities in the Inverse Scattering
Problem, G. J. Math Phys. 26(1), January 1985.

\bibitem[Du]{Du} Duistermaat, J.J. Fourier Integral Operators (lecture notes).  Courant
Institute of Mathematical Sciences, 1973.


\bibitem[GrSj]{GrSj} Grigis, A. \& Sjostrand, J. \textit{Microlocal Analysis for Differential Operators}, Cambridge UP, 1994. 

\bibitem[Ho]{Ho} Hormander, L. Fourier Integral Operators I.  Acta Math 127 (1971) 79-183.


\bibitem[KuKy]{KuKy}Kuchment, P. \& Kuyansky, L. Mathematics of Thermoacoustic
Tomography, European J. Appl. Math., v. 19, 2008, pp. 1-34.

\bibitem[Rak]{Rak} Rakesh. (1988) A Linearised inverse problem for the wave
equation, Communications in Partial Differential Equations, 13:5, 573 - 601.

\bibitem[StUhl]{StUhl} Stefanov, P. \& Uhlmann, G. Thermoacoustic tomography with variable sound speed.  To appear.
http://arxiv.org/PS\_cache/arxiv/pdf/0902/0902.1973v1.pdf.


\bibitem[Tay]{Tay} Taylor, D. Partial Differential Equations, Vol I \& II,
Springer-Verlag, 1996.

\bibitem[Tre]{Tre} Treves, F. Introduction to Pseudodifferential Operators and Fourier Integral Operators, Vol I \& II, Plenum Press, 1980.

\bibitem[Xu]{Xu} Xu, Y., Wang, L., Ambartsoumian, G. \& Kuchment, P. 2004 Reconstructions
in limited view thermoacoustic tomography. Medical Physics, 31(4), 724-733.

\end{thebibliography}
\end{document}